\def\draft{n}
\newtheorem{theorem}{Theorem}[section]
\theoremstyle{definition}
\newtheorem{lemma}[theorem]{Lemma}
\newtheorem{remark}[theorem]{Remark}
\newtheorem{conjecture}[theorem]{Conjecture}
\def\printname#1{
        \if\draft y
                \smash{\makebox[0pt]{\hspace{-0.5in}
                        \raisebox{8pt}{\tt\tiny #1}}}
        \fi
}
\newcommand{\psdraw}[2]
         {\begin{array}{c} \hspace{-1.3mm}
        \raisebox{-4pt}{\epsfig{figure=draws/#1.eps,width=#2}}
        \hspace{-1.9mm}\end{array}}
\newlength{\standardunitlength}
\long\def\@makecaption#1#2{%
     \vskip 10pt

\setbox\@tempboxa\hbox{
       \small\sf{\bfcaptionfont #1. }\ignorespaces #2}%
     \ifdim \wd\@tempboxa >\captionwidth {%
         \rightskip=\@captionmargin\leftskip=\@captionmargin
         \unhbox\@tempboxa\par}%
       \else
         \hbox to\hsize{\hfil\box\@tempboxa\hfil}%
     \fi}
\font\bfcaptionfont=cmssbx10 scaled \magstephalf
\newdimen\@captionmargin\@captionmargin=2\parindent
\newdimen\captionwidth\captionwidth=\hsize
\def\lbl#1{\label{#1}\printname{#1}}
\def\BN{\mathbbm N}
\def\BZ{\mathbbm Z}
\def\BQ{\mathbbm Q}
\def\BC{\mathbbm C}
\def\BW{\mathbbm W}
\def\BWt{\widetilde{\mathbbm W}}
\def\la{\langle}
\def\ra{\rangle}
\def\l{\lambda}
\def\SL{\mathrm{SL}}
\def\longto{\longrightarrow}
\def\pt{\partial}
\def\bl{\boldsymbol{\lambda}}                   
\def\fsl{\mathfrak{sl}}
\def\fg{\mathfrak{g}}
\def\ga{\gamma}
\def\cxymatrix#1{\xy*[c]\xybox{\xymatrix#1}\endxy}
\begin{document}
\title[The colored HOMFLY polynomial is $q$-holonomic]{The 
colored HOMFLY polynomial is $q$-holonomic}
\author{Stavros Garoufalidis}
\address{School of Mathematics \\
         Georgia Institute of Technology \\
         Atlanta, GA 30332-0160, USA \newline
         {\tt \url{http://www.math.gatech.edu/~stavros }}}
\email{stavros@math.gatech.edu}
\thanks{%
The author was supported in part by grant DMS-0805078 of the US National 
Science Foundation.\bigskip\\
{\em 2010 Mathematics Subject Classification:} Primary 57N10. Secondary 
57M25, 33F10, 39A13.\\
{\em Key words and phrases:}
Knots, HOMFLY polynomial, colored HOMFLY polynomial, MOY graphs, 
skein theory, $q$-holonomic, $A$-polynomial, AJ Conjecture, super-polynomial, 
Chern-Simons theory.
}

\date{November 26, 2012}

\begin{abstract}
We prove that the colored HOMFLY polynomial
of a link, colored by symmetric or exterior powers of the fundamental
representation, is $q$-holonomic with respect to the color parameters.
As a result, we obtain the existence of an $(a,q)$ super-polynomial of all
knots in 3-space. 
Our result has implications on the quantization of 
the $\SL(2,\BC)$ character variety of knots using ideal triangulations
or the topological recursion, and motivates 
questions on the web approach to representation theory.
\end{abstract}

\maketitle

\tableofcontents



\section{Introduction}
\lbl{sec.intro}

\subsection{The colored Jones polynomial}
\lbl{sub.cjones}

In \cite{GL} it was shown that the colored Jones function $J^{\fg}_{L,\l}(q)$
of an oriented link $L$ in 3-space is $q$-holonomic with respect to the color 
parameters $\l$ of a fixed simple Lie algebra $\fg$. 
In particular, if we fix a knot $K$ and a dominant weight
$\l$ of the simple Lie algebra $\fsl_N$, then the sequence $J^{\fsl_N}_{K, n\l}(q)
\in \BZ[q^{\pm 1/2}]$ is $q$-holonomic with respect to $n$. In other words,
the sequence $(J^{\fsl_N}_{K, n\l}(q))_{n \in \BN}$ satisfies a linear 
recursion with coefficients polynomials in $q$ and $q^n$. When $N=2$, the 
minimal order content-free recursion relation is the non-commutative 
$A$-polynomial of $K$ (see \cite{GL,Ga2}) which plays a key role to several
conjectures in Quantum Topology. For a detailed discussion, see \cite{Ga3}
and references therein.

The minimal order content-free recursion for the sequence 
$(J^{\fsl_N}_{K, n\l}(q))_{n \in \BN}$ exists for every $N \geq 2$. There are two
natural questions.
\begin{itemize}
\item
How do the coefficients of these recursions depend on $N$?
\item
What is the specialization of these recursions to $q=1$?
\end{itemize}

The goal of this paper is to answer the first question and pose a natural
conjecture regarding the second question. More precisely, we
prove that the colored HOMFLY polynomial of a link, colored by symmetric 
powers of the fundamental representation of $\fsl_N$, is $q$-holonomic with 
respect to the color parameters, see Theorem \ref{thm.1} below. In particular,
for a knot there is a single recursion for the colored HOMFLY polynomial
colored by the $n$-th symmetric power, such that the coefficients of the
recursion are polynomials in $q$ and $a$. Moreover, specializing $a=q^N$
for fixed $N$ gives a recursion for $(J^{\fsl_N}_{K, n\l_1}(q))$ and further
specializing the coefficients of the recursion to $q=1$ gives a 2-variable
polynomial which is independent of $N$.
As a result, we obtain a rigorously-defined $(a,q)$-variable deformation of 
a two-variable polynomial of a knot, and conjecture that the latter is
the $A$-polynomial of a knot, see Conjecture \ref{conj.1} below.

Our result has implications on the quantization of 
the $\SL(2,\BC)$ character variety of knots using {\em ideal triangulations}
(see \cite{Di1,DG}) or the {\em topological recursion} 
(see \cite{Borot-Eynard}), and motivates questions on the {\em web}
i.e., skein-theory approach to representation theory
(see \cite{Cautis,Ga:Morrison}). We plan to discuss these implications 
in subsequent publications.

\subsection{Super-polynomials in mathematical physics}
\lbl{sub.super}

Mathematical physics has formulated several interesting questions and 
conjectures regarding the structure of the colored HOMFLY polynomial of a knot.
For instance, the LMOV Conjecture of Labastida-Marino-Ooguri-Vafa is 
an integrality statement for the coefficients of the colored HOMFLY 
polynomial, \cite{LMOV}. To formulate enumerative integrality conjectures
concerning counting of BPS states, physicists often use the term 
{\em super-polynomial} in various contexts.

A super-polynomial (an element of $\BZ[a^{\pm 1},q^{\pm 1}, t^{\pm 1}]$) 
that specializes to the HOMFLY polynomial of a knot when $t=-1$ and its 
$\fsl_N$-Khovanov-Rozansky homology when $a=q^N$ was conjectured to exist
in \cite[Conj.1.2]{Dunfield-Gukov}, motivated by the prior work of 
\cite{Gukov-Vafa}. 

Another super-polynomial (an element of $\BZ[q^{\pm 1},a^{\pm 1},t^{\pm 1}][M]
\la L \ra$, where $LM=qML$) that specializes to a recursion for the 
sequence $(J^{\fsl_N}_{K, n\l}(q))_{n \in \BN}$ when $t=-1$ and $a=q^N$ was
conjectured to exist in \cite{Fu1,Fu2,Gukov-Stosic}. 
A similar conjecture (without the
$t$-variable) was also studied in some cases by \cite{Ito1,Ito2,NRS}.
 
Physics reveals that a super-polynomial is an exciting structure which
ties together perturbative and non-perturbative aspects of quantum knot
theory. For an up-to-date review to this wonderful subject, see the
survey \cite{Gukov:lectures}.

The existence of a super-polynomial has been speculated, but so far not proven.
Theorem \ref{thm.1} settles the existence of the 
super-polynomial of a link colored by the symmetric powers
of the fundamental representation of $\fsl_N$.

In a separate publication we will discuss the $q$-holonomicity of the 
colored HOMFLY polynomial with respect to an arbitrary representation 
of $\fsl_N$.

\subsection{The colored HOMFLY polynomial}
\lbl{sub.cj}

The {\em HOMFLY polynomial} $X_L \in \BQ(a,q)$ of a framed oriented 
link in $S^3$ is uniquely characterized by the skein axioms
$$
X_{\psdraw{L+}{.2in}}-X_{\psdraw{L-}{.2in}}
=(q-q^{-1}) \, X_{\psdraw{L0}{.2in}} \qquad
X_{\psdraw{curl}{.1in}}=a X_{\psdraw{straight}{.02in}} \qquad
X_{\text{unknot}}=\frac{a-a^{-1}}{q-q^{-1}}
$$

The HOMFLY polynomial has an extension to the colored HOMFLY polynomial
$X_{L,\bl}(a,q) \in \BQ(a,q)$, defined for framed oriented links $L$ in $S^3$ 
with 
$r$ ordered components colored by an $r$-tuple of partitions 
$\bl=(\l_1,\dots,\l_r)$. Roughly, the colored HOMFLY polynomial is a sum
of the HOMFLY of parallels of the link, dictated in a universal way by
the color. For a precise and detailed definition, see \cite{Mo1,Mo2}.

\subsection{Recursions and the $q$-Weyl algebra}
\lbl{sub.qweyl}

Recall the notion of a $q$-holonomic sequence introduced by Zeilberger 
\cite{Z90}. We say that a sequence $f_n(q) \in E(q)$ for $n \in \BN$
is $q$-{\em holonomic} (where $E$ is a field of characteristic zero) 
if there exist $d \in \BN$ and $a_j(u,v) \in E[u,v]$ such that for all 
$n \in \BN$ we have:
$$
\sum_{j=0}^d a_j(q,q^n) f_{n+j}(q) = 0
$$
We can write the above recursion in operator form
$$
P f =0, \qquad P=\sum_{j=0}^d a_j(q,M) L^j 
$$
where the operators $M$ and $L$ act on a sequence $(f_n(q))_{n \in \BN}$
by
$$
(Mf_n)(q)=q^n f_n(q), \qquad (Lf_n)(a,q)=f_{n+1}(q)
$$
The operators $M$ and $L$ generate the $q$-Weyl algebra $\BW'=E(q)[M]\la L \ra$
where $LM=qML$. The set $\{P \in \BW' \, | P f=0 \}$ is a left ideal,
non-zero iff and only if $f$ is $q$-holonomic. Although $\BW'$ is not
a principal ideal domain, it was observed in \cite{Ga2} that its 
localization $E(q,M)\la L \ra$ is a principal ideal domain \cite{Ga2}.
If we choose a generator of $\{P \in \BW \, | P f=0 \}$, we can lift it
to a unique content-free element $P_f$ of $\BW'$. Below, we will consider
two versions 
$$
\BWt=\BZ[a,q,M]\la L \ra, \qquad \BW=\BZ[q,M]\la L \ra
$$
of the $q$-Weyl algebra. There is an obvious commutative diagram
\begin{equation}
\lbl{eq.xy}
\cxymatrix{{{\BWt}\ar[rr]^-{a=q^N}\ar[rd]_{a=1,q=1} & & {\BW}\ar[ld]^{q=1}\\
&{\BZ[M,L]} &  }}
\end{equation}
Finally, we point out the existence of a multivariable generalization of 
$q$-holonomic sequences $f: \BN \longto E(q)$, see \cite{Sabbah} and also 
\cite{GL}.

\subsection{Our results}
\lbl{sub.results}

\begin{theorem}
\lbl{thm.1}
Fix a framed oriented link $L$ with $r$ ordered components. Then, 
$$
(n_1,\dots,n_r) \mapsto X_{L,(n_1),\dots,(n_r)}(a,q)
\qquad
\text{and}
\qquad
(n_1,\dots,n_r) \mapsto X_{L,(1^{n_1}),\dots,(1^{n_r})}(a,q)
$$ and
are $q$-holonomic functions.
\end{theorem}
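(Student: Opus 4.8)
The plan is to reduce the computation of the colored HOMFLY polynomial to a finite multisum of $q$-holonomic local weights and then invoke the closure properties of $q$-holonomic functions. Since the color partitions are single rows $(n_i)$ (symmetric powers) or single columns $(1^{n_i})$ (exterior powers), I would work inside the HOMFLY skein theory of MOY graphs, where each edge carries a nonnegative integer label and the evaluation of a closed graph is given by an explicit state sum. Concretely, first I would fix a planar diagram $D$ of $L$ and cable the $i$-th component with the $n_i$-fold symmetrizer (resp.\ antisymmetrizer). Using the skein relations, each crossing of $D$ expands as a finite sum of MOY graphs whose edge labels are bounded linearly by the colors $n_1,\dots,n_r$, with coefficients that are ratios of quantum integers and quantum binomial coefficients in $q$ and, after passing to the universal $a$-graded skein, in $a$.

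Next I would evaluate each resulting closed MOY graph by the Murakami--Ohtsuki--Yamada state sum: the evaluation is a sum over admissible integer flows (states) on the edges of the graph, and the summand is a product of local Boltzmann weights attached to the vertices and edges. Each such weight is a quantum binomial coefficient (or a quantum factorial) in the state variables and the color parameters, and is therefore $q$-holonomic as a multivariable function over the field $E=\BQ(a)$; the $a$-dependence is carried along as a parameter in the ground field, so it causes no difficulty at the level of the bare holonomicity statement. The state-sum domains are cut out by finitely many linear inequalities in the states and the colors, and the summand vanishes outside a region that grows polynomially with the colors, so the sums are proper in the sense required below.

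I would then assemble the two closure properties. The product of finitely many $q$-holonomic functions is $q$-holonomic, so each summand of the state sum is $q$-holonomic in all variables jointly. The essential input is the fundamental theorem of $q$-holonomicity, in the multivariable form of Sabbah and of \cite{GL}: a proper finite sum of a $q$-holonomic function over part of its variables is again $q$-holonomic in the remaining variables. Summing first over the internal state variables of each MOY graph, then over the finitely many graphs produced by resolving the crossings, yields that $(n_1,\dots,n_r)\mapsto X_{L,(n_1),\dots,(n_r)}(a,q)$ is $q$-holonomic. The exterior-power statement for $(1^{n_1}),\dots,(1^{n_r})$ follows identically after replacing symmetrizers by antisymmetrizers (equivalently, by the transpose duality $q\mapsto -q^{-1}$ on the skein), since this changes only the explicit local weights and not the $q$-holonomic bookkeeping.

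The step I expect to be the main obstacle is setting up the state sum so that the multisum closure theorem genuinely applies: one must exhibit the colored HOMFLY as a sum of products of functions each of which is $q$-holonomic \emph{and} supported on a domain defined by finitely many linear inequalities, with the summation effectively finite for every value of the colors. Producing such an honest proper $q$-holonomic summand from the MOY/skein evaluation---controlling the admissibility conditions on flows, the vanishing of weights outside the admissible cone, and the finiteness of the crossing expansion uniformly in the colors---is the technical heart of the argument; once it is in place, the conclusion is a formal consequence of the closure properties quoted above.
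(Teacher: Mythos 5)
Your overall architecture matches the paper's: reduce the colored HOMFLY polynomial to evaluations of colored MOY graphs by expanding crossings (with $q$-proper hypergeometric coefficients, and with the graph colorings linear forms in $(n_1,\dots,n_r)$), show that $\ga \mapsto \la G,\ga\ra$ is $q$-holonomic for each MOY graph $G$ (the paper's Theorem \ref{thm.2}), and conclude by closure of $q$-holonomic functions under products and proper multisums, using the transpose duality $X_{K,\l}(a,q)=(-1)^{|\l|}X_{K,\l^T}(a,q^{-1})$ to pass between symmetric and exterior colorings (the paper's Remark \ref{rem.1} --- note that since the MOY calculus natively handles exterior powers, this reduction should come first, not as an afterthought). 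The divergence, and the gap, is in how you establish the middle step.

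The Murakami--Ohtsuki--Yamada state sum you invoke does not have the form your argument needs: it computes only the specialization $\la G,\ga\ra_N$ at $a=q^N$, and its states are labelings of the edges by subsets of $\{1,\dots,N\}$ of cardinality equal to the edge color, so the summation domain itself grows with $N$. It is not a sum over integer flows with weights lying in $\BQ(a,q)$, and there is no a priori two-variable evaluation $\la G,\ga\ra \in \BQ(a,q)$ at all; asserting that ``the $a$-dependence is carried along as a parameter in the ground field'' assumes exactly what must be proven. The paper fills this in two steps: Lemma \ref{lem.eval}(b) shows the $N$-dependence of $\la G,\ga\ra_N$ enters only through quantum integers of the form $(q^{Nj}-q^{-Nj})/(q^j-q^{-j})$, so a two-variable lift exists; and instead of a global state sum it runs the Jeong--Kim evaluation algorithm, whose local relations (relations (2.3)--(2.8) of Cautis) have coefficients that are products of $q$-binomials in $a$, $q$ and linear forms of the colors, and which replace $(G,\ga)$ by graphs from a finite set determined by $G$ with colorings linear in $\ga$ (Lemma \ref{lem.3}). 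That recursive algorithm is precisely the ``honest proper $q$-holonomic presentation'' that you yourself flag as the technical heart and leave unresolved; so as written your proof is incomplete at its crucial step. If you wish to salvage your route, iterating the Cautis relations to fully unwind a graph does yield a multisum with $q$-proper hypergeometric summand in $(a,q)$ and the colors, to which the Sabbah/\cite{GL} multisum theorem applies --- but this is essentially the paper's argument rephrased in state-sum clothing, not an independent shortcut.
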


\begin{remark}
\lbl{rem.1}
There is an involution $\l \mapsto \l^T$ where $\l^T$ is the transpose of 
$\l$, obtained by interchanging columns and rows. For instance,
if $\l=(4,2,1)$ then $\l^T=(3,2,1,1)$. It is a consequence of rank-level
duality (see \cite[Eqn.4.41]{LMOV}) that 
$X_{K,\l}(a,q)=(-1)^{|\l|}X_{K,\l^T}(a,q^{-1})$, 
where $|\l|$ is the number of boxes of $\l$. Since $(n)^T=(1^n)$, it suffices
to show that $X_{L,(1^{n_1}),\dots,(1^{n_r})}$ is $q$-holonomic with respect to
$(n_1,\dots,n_r)$.
\end{remark}

\begin{remark}
\lbl{rem.cases}
Theorem \ref{thm.1} was previously known for torus links in \cite{Brini},
and for finitely many twist knots (that include the 
$3_1, 4_1, 5_2$ and $6_1$ knots) in \cite{NRS}.
\end{remark}

\begin{theorem}
\lbl{thm.11}
\rm{(a)} For every knot $K$, there exists a unique content-free minimal 
order recursion relation $A_K(a,q,M,L) \in \BWt$ of $X_{K,(n)}(a,q)$.
\newline
\rm{(b)} For every fixed $N \in \BN$, $A_K(q^N,q,M,L) \in \BW$ 
is a recursion of the sequence $(J^{\fsl_N}_{K,n\l_1}(q))$ with respect to $n$.
\newline
\rm{(c)} For every fixed $N \in \BN$, the specialization
$$
A_K(q^N,q,M,L)|_{q=1}=A_K(1,1,M,L)
$$
is independent of $N$.
\end{theorem}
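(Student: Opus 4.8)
The plan is to obtain all three parts from Theorem \ref{thm.1}, the principal-ideal-domain property of the localized Weyl algebra recalled above, the standard $\fsl_N$-specialization of the colored HOMFLY polynomial, and the formal structure of diagram \eqref{eq.xy}. For part (a), I would apply Theorem \ref{thm.1} in the single-component case $r=1$: the sequence $n\mapsto X_{K,(n)}(a,q)$ is $q$-holonomic over the field $E=\BQ(a)$, with $q$ as the quantum parameter. Its annihilator is then a nonzero left ideal in the localized Ore algebra $\BQ(a,q,M)\la L\ra$, which is a principal ideal domain by \cite{Ga2}. Taking the minimal-order generator and clearing denominators in $a$ and $q$ produces the unique content-free lift $A_K\in\BWt=\BZ[a,q,M]\la L\ra$; this is the standard passage from a holonomic sequence to its minimal content-free recursion.

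For part (b), I would invoke the classical fact that setting $a=q^N$ in the colored HOMFLY polynomial recovers the $\fsl_N$ colored Jones polynomial, $X_{K,(n)}(q^N,q)=J^{\fsl_N}_{K,n\l_1}(q)$, the coloring $(n)$ corresponding to the $n$-th symmetric power $n\l_1$ of the fundamental representation. Writing $A_K=\sum_j a_j(a,q,M)L^j$ with $a_j\in\BZ[a,q,M]$, the annihilation identity reads $\sum_j a_j(a,q,q^n)X_{K,(n+j)}(a,q)=0$ for all $n$. Since the $a_j$ are polynomial in $a$ and the colored HOMFLY is regular at $a=q^N$ (being equal there to the colored Jones), the substitution $a=q^N$ is legitimate and yields $\sum_j a_j(q^N,q,q^n)J^{\fsl_N}_{K,(n+j)\l_1}(q)=0$; that is, $A_K(q^N,q,M,L)$ is a recursion for the colored Jones sequence. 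It is moreover nonzero: were it zero, every $a_j(q^N,q,M)$ would vanish, forcing the monic-in-$a$ factor $a-q^N$ to divide each $a_j(a,q,M)$, contradicting the content-freeness of $A_K$.

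For part (c), I would chase $A_K\in\BWt$ through diagram \eqref{eq.xy}. The composite $\BWt\xrightarrow{a=q^N}\BW\xrightarrow{q=1}\BZ[M,L]$ sends $A_K$ to $A_K(q^N,q,M,L)|_{q=1}$, while the diagonal map $\BWt\xrightarrow{a=1,q=1}\BZ[M,L]$ sends it to $A_K(1,1,M,L)$. Because imposing $q=1$ after $a=q^N$ forces $a=1$, both composites carry each monomial $a^p q^r M^s L^t$ to $M^s L^t$ and hence agree on all of $\BWt$; this is precisely the commutativity asserted in \eqref{eq.xy}. Thus $A_K(q^N,q,M,L)|_{q=1}=A_K(1,1,M,L)$, and the right-hand side carries no $N$-dependence, so it is independent of $N$.

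The one step that is more than bookkeeping is the specialization used in part (b): I would need to confirm that $X_{K,(n)}(q^N,q)$ equals the colored Jones polynomial under matching framing and normalization conventions, so that the two sequences carry the same $M$-action, and that no $a$-pole obstructs the substitution $a=q^N$. Granting this well-known HOMFLY-to-$\fsl_N$ specialization, the arguments in parts (a), (b) and (c) are purely formal.
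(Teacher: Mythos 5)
Your proposal is correct and matches the paper's (largely implicit) argument: the paper derives Theorem \ref{thm.11} exactly as you do, from Theorem \ref{thm.1} together with the principal-ideal-domain property of the localized $q$-Weyl algebra and the content-free lift recalled in Section \ref{sub.qweyl}, the specialization $a=q^N$ of the colored HOMFLY polynomial (whose regularity is supplied by Lemma \ref{lem.eval}(b)), and the commutativity of diagram \eqref{eq.xy}. Your extra touches --- the $(a-q^N)\nmid$ content-freeness argument for nonvanishing in (b) and the explicit monomial check in (c) --- are correct elaborations of the same route (only note that in (a) one clears denominators in $M$ as well as in $a$ and $q$, and uniqueness of the content-free lift is up to sign).
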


For a fixed natural number $N$, it might be the case that the recursion
$A_K(q^N,q,M,L) \in \BW$ of the sequence $(J^{\fsl_N}_{K,n\l_1}(q))$ is not
of minimal order. If for $N=2$ the above recursion is of minimal order,
then $A_K(q^2,q,M,L) \in \BW$ coincides with 
the non-commutative $A$-polynomial of $K$. In that case, the AJ Conjecture 
(see \cite{Ga2} and also \cite{Ge}) combined with Theorem \ref{thm.11}
imply the following conjecture relating the super-polynomial $A_K(a,q,M,L)$
to the $A$-polynomial $A_K(M,L)$ of $K$, introduced and studied in 
\cite{CCGLS}.

\begin{conjecture}
\lbl{conj.1}
For every knot $K$, we have:
$$
A_K(1,1,M,L)=A_K(M,L) b_K(M) \in \BZ[M,L]
$$
where $A_K(M,L)$ is the $A$-polynomial of $K$ 
and $b_K(M) \in \BZ[M]$.
\end{conjecture}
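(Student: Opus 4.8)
The statement is a conjecture, and the paragraph preceding it indicates that it is meant to follow from the AJ Conjecture together with Theorem \ref{thm.11}; accordingly the plan is to make that conditional implication precise, and then to isolate the hypotheses whose removal is the real difficulty. First I would specialize $N=2$ in Theorem \ref{thm.11}(c) to obtain
$$
A_K(1,1,M,L)=A_K(q^2,q,M,L)\big|_{q=1},
$$
which reduces the problem to understanding the $q=1$ limit of the single operator $A_K(q^2,q,M,L)\in\BW$. By Theorem \ref{thm.11}(b) this operator annihilates the colored Jones sequence $(J^{\fsl_2}_{K,n\l_1}(q))_n$. Let $\alpha_K(q,M,L)$ denote the non-commutative $A$-polynomial of $K$, i.e. the content-free generator of the annihilator of this sequence in the localization $\BQ(q,M)\la L\ra$ of the $q$-Weyl algebra, which is a principal ideal domain. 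Then $A_K(q^2,q,M,L)$ is a left multiple of $\alpha_K$; writing $\bar A_K$ for its content-free lift in $\BW$, the specialization $a=q^2$ strips off a content $c_K(q,M)\in\BZ[q,M]$, so that $A_K(q^2,q,M,L)=c_K(q,M)\,\bar A_K$.

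Next, under the minimality hypothesis stated before the conjecture --- that $A_K(q^2,q,M,L)$ is of minimal $L$-order among annihilators of $(J^{\fsl_2}_{K,n\l_1})$ --- the cofactor relating $\bar A_K$ to $\alpha_K$ has $L$-degree zero, so by Gauss's lemma the two primitive operators are associates, $\bar A_K\doteq\alpha_K$. Invoking the AJ Conjecture, whose assertion is precisely that $\alpha_K(q,M,L)\big|_{q=1}$ recovers $A_K(M,L)$ up to a factor in $\BZ[M]$, and then restoring the content at $q=1$ (which contributes the polynomial $c_K(1,M)\in\BZ[M]$), I would assemble
$$
A_K(1,1,M,L)=c_K(1,M)\,\bar A_K\big|_{q=1}=A_K(M,L)\,b_K(M),
$$
with $b_K(M)\in\BZ[M]$. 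The key point that the discrepancy carries no $L$-dependence is automatic in this scheme: all content is measured in the coefficient field $\BQ(q,M)$, so every factor discarded along the way lies in $\BZ[q,M]$ and degenerates at $q=1$ to an element of $\BZ[M]$.

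The hard part is that this derivation rests on two open inputs, and the main obstacle is eliminating them rather than running the bookkeeping above. The AJ Conjecture is known only for restricted families of knots, so the argument is at present conditional on it. More subtly, the minimality of the $N=2$ specialization is not guaranteed: setting $a=q^2$ can lower the $L$-order of $A_K(a,q,M,L)$ or introduce genuine content, so in general $A_K(q^2,q,M,L)$ is merely a left multiple $P\cdot\alpha_K$ with $P$ of positive $L$-degree, and such a $P$ could survive to $q=1$ and contaminate the factorization with spurious $L$-dependence. An unconditional proof would therefore have to control the order defect produced by the $a=q^2$ specialization --- for instance by comparing the Newton polytope of $A_K(a,q,M,L)$ in the $a$-direction against the colored Jones recursion, or by a deformation argument that quantizes an $a$-deformed character variety and then takes $a\to1$ --- and I expect this control of the specialization, not the invocation of AJ, to be the decisive and most delicate step.
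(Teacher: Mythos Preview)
The statement is a conjecture, and the paper offers no proof of it; the only content in the paper is the motivational paragraph preceding the conjecture, which says that \emph{if} the $N=2$ specialization $A_K(q^2,q,M,L)$ happens to be of minimal order, then it coincides with the non-commutative $A$-polynomial, and \emph{then} the AJ Conjecture together with Theorem~\ref{thm.11} yield the asserted factorization. Your proposal correctly identifies this situation and takes exactly the same route as the paper's motivation---specialize to $N=2$ via Theorem~\ref{thm.11}(c), use Theorem~\ref{thm.11}(b) to land in the annihilator of the $\fsl_2$-colored Jones sequence, invoke minimality to identify with the non-commutative $A$-polynomial up to content in $\BZ[q,M]$, and then apply AJ at $q=1$---so there is nothing to compare beyond noting that you have spelled out the bookkeeping (content factors, Gauss's lemma, the principal-ideal-domain structure of the localized Weyl algebra) that the paper leaves implicit, and you have been appropriately explicit that both the AJ Conjecture and the minimality of the $a=q^2$ specialization are genuine open hypotheses rather than steps one could hope to fill in.
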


\subsection{Acknowledgment}
The authors wishes to thank G. Borot, T. Dimofte and S. Gukov for guidance
in the physics literature and T. Le and S. Morrison for many enlightening 
conversations.


\section{MOY graphs, spiders and webs}
\lbl{sec.MOY}

\subsection{MOY graphs}
\lbl{sub.MOY}

In \cite{MOY}, Murakami-Ohtsuki-Yamada introduced the notion of a MOY
graph which is an enhancement of the HOMFLY skein theory of links
colored by exterior powers of the fundamental representation. MOY graphs 
are similar to the {\em spiders} of Kuperberg \cite{Kuperberg} and the 
{\em webs} of Morrison et al \cite{Morrison,Cautis}.

A {\em MOY graph} is a trivalent planar graph $G$ with oriented edges, possibly
multiple edges and loops with no sinks nor sources. Locally, a MOY graph
is made out of forks and fuses, shown in Figure \ref{fig.ff},
using the terminology of \cite{Morrison,Cautis}.
A {\em coloring} $\ga$ of a MOY graph $G$ is an assignment of a 
natural number to each edge of the graph such that at each fork the {\em flow
condition} is satisfied as shown in Figure \ref{fig.ff}. Here, the color
$i \in \BN$ of an edge corresponds to the $i$-th exterior power of the
fundamental representation.

\begin{figure}[htpb]
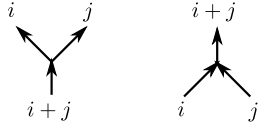

$$ 
\psdraw{forks.fuses}{1.3in} 
$$
\caption{Forks on the left and fuses on the right with a coloring.}\lbl{fig.ff}
\end{figure}

\begin{remark}
\lbl{rem.train}
MOY graphs visually resemble {\em oriented train tracks}. The latter were
introduced by Thurston \cite[Chpt.8]{Thu} and further studied by Penner-Harer
\cite{PH}. We will not make further use of this observation here.
\end{remark}

For $N \in \BN$, \cite[p.328]{MOY} define 
the evaluation $\la G, \ga \ra_N \in \BN[q^{\pm 1}]$ of a colored MOY graph.
In the formulas below, our $q$ equals to $q^2$ in \cite{MOY}. 
%
\begin{lemma}
\lbl{lem.eval}
\rm{(a)} 
%
There is a 1-1 correspondence between the set of colorings of a MOY
graph $G$ by integers and $\BZ^r$, where $r$ is the number of bounded regions
of $G$. 
\newline
\rm{(b)} Given $(G,\ga)$ there exists $\la G, \ga \ra \in \BQ(a,q)$ such 
that for every $N \in \BN$ we have
$$
\la G, \ga \ra|_{a=q^N}=\la G, \ga \ra_N
$$
\end{lemma}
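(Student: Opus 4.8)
For part (a), I plan to identify colorings with discrete flows and then integrate them against a potential on the regions of $G$. Writing a coloring as a vector $c\in\BZ^E$ indexed by the edges, the flow condition at each fork or fuse says exactly that the incoming colors sum to the outgoing colors, so $c$ lies in the kernel of the divergence map $\partial\colon \BZ^E\to\BZ^V$ of the oriented graph; admissible integer colorings are therefore precisely the integer $1$-cycles of $G$. Since the oriented incidence matrix of a graph is totally unimodular, this kernel is a direct summand of $\BZ^E$, free of rank equal to the first Betti number of $G$, which Euler's formula $V-E+F=2$ identifies with the number $r$ of bounded regions. To make the bijection with $\BZ^r$ explicit rather than abstract, I would use planarity: assign an integer $\phi(R)$ to each region, normalized by $\phi(R_\infty)=0$ on the unbounded region, and color each oriented edge by the difference of the potentials of the two regions it separates. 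The flow condition around a trivalent vertex is then the telescoping cancellation of these differences, so this recipe always produces an admissible coloring; conversely, given an admissible $c$ one recovers $\phi$ by summing colors with signs along a path in the dual graph starting at $R_\infty$, the flow condition guaranteeing path-independence. This yields the desired bijection between colorings and $(\phi(R_1),\dots,\phi(R_r))\in\BZ^r$ over the $r$ bounded regions.

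For part (b), the plan is to run the MOY evaluation of \cite{MOY} ``universally'' in $N$. Any closed colored MOY graph can be reduced to the empty diagram by repeatedly applying the local MOY relations (circle evaluation, bigon and square moves), accumulating a scalar which equals $\la G,\ga\ra_N$. The crucial point is that every coefficient appearing in these relations is a Gaussian binomial $\begin{bmatrix} m \\ k \end{bmatrix}_q$ in which $m$ is either a constant built from the colors or of the form $N-c$ for some constant $c$, and in the latter case $\begin{bmatrix} N-c \\ k \end{bmatrix}_q = \prod_{j=0}^{k-1}\frac{q^{N-c-j}-q^{-(N-c-j)}}{q^{j+1}-q^{-(j+1)}}$ depends on $N$ only through $q^N$. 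Substituting a formal variable $a$ for $q^N$ in each such coefficient, I obtain an element $\la G,\ga\ra\in\BQ(a,q)$ (in fact a Laurent polynomial in $a$ over $\BQ(q)$) whose specialization at $a=q^N$ reconstructs, move for move, the scalar computed at that value of $N$; hence $\la G,\ga\ra|_{a=q^N}=\la G,\ga\ra_N$ for every $N$.

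The remaining issue is well-definedness, since the sequence of reduction moves is not canonical. Two reduction schemes produce rational functions $R_1,R_2\in\BQ(a,q)$ with $R_1(q^N,q)=R_2(q^N,q)=\la G,\ga\ra_N$ for all $N$, because MOY show the $N$-evaluation is independent of the reduction. Since $\{q^N:N\in\BN\}$ is an infinite subset of $\BQ(q)$ and a nonzero element of $\BQ(q)(a)$ has only finitely many zeros in $a$, it follows that $R_1=R_2$; the same argument gives uniqueness of $\la G,\ga\ra$. I expect the main obstacle to be precisely this uniformity at the level of the relations themselves: one must go through the list of moves in \cite[p.328]{MOY}, confirm that they suffice to evaluate every closed colored MOY graph, and verify that each coefficient depends on $N$ only through $q^N$, with no leftover $N$-dependence.
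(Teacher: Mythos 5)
Your proposal is correct, but the two halves compare differently against the paper. For part (a) you have essentially reproduced the paper's argument: the paper likewise identifies integer colorings with the kernel of the boundary map $\partial\colon C_1 \to C_0$ of the oriented graph (i.e., with $H_1$, ``Kirchhoff's theorem''), computes the rank by Euler characteristic, and converts $|E|-|V|+1$ into the number of bounded regions via planarity; your explicit region-potential dictionary (normalize $\phi(R_\infty)=0$, color an edge by the difference of potentials across it) is just a concrete realization of that isomorphism, and a nice one. For part (b) you take a genuinely different route. The paper argues directly from the state-sum \emph{definition} of $\la G,\ga\ra_N$ in \cite[p.328]{MOY}: the only $N$-dependence there enters through geometric sums
$$
\sum_{i=-\frac{N-1}{2}}^{\frac{N-1}{2}} q^{2ji+k}
= q^k\,\frac{a^{j}-a^{-j}}{q^{j}-q^{-j}}\Big|_{a=q^N},
$$
so $\la G,\ga\ra\in\BQ(a,q)$ exists with no appeal to any reduction procedure. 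You instead evaluate $G$ by local relations (circle, bigon, square moves), observe that every coefficient is a quantum binomial depending on $N$ only through $q^{\pm N}$, and settle well-definedness by the clean remark that an element of $\BQ(q)(a)$ is determined by its values at the infinitely many points $a=q^N$ — an argument which, as a bonus, gives uniqueness of $\la G,\ga\ra$, something the paper leaves implicit. The cost of your route is exactly the dependency you flag at the end: one must know that the local moves suffice to reduce \emph{every} closed colored MOY graph to the empty diagram. That completeness statement is not in \cite[p.328]{MOY} (that page is the state-sum definition, not the relations); it is the content of the Jeong-Kim evaluation algorithm \cite{Jeong-Kim}, which this very paper says requires ``fixing of some intermediate steps'' (see the discussion around Lemma \ref{lem.3} and relations (2.3)--(2.8) of \cite{Cautis}). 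So your proof is sound modulo that nontrivial input, whereas the paper's definition-based proof of (b) sidesteps it entirely; also note that both arguments for (a) tacitly assume $G$ connected when invoking Euler's formula, a hypothesis the paper makes explicit.
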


\begin{proof}
(a) follows from Kirkhoff's theorem. Fix a connected graph $G$ with oriented
edges, not necessarily planar or trivalent, possibly with loops and 
multiple edges. Then, we have a chain complex 
$$
C: \qquad 0 \to C_1 \overset{\pt}{\to} C_0 \to 0
$$
where $C_1$ and $C_0$ is the free abelian group on the set $E$ of edges and 
$V$ vertices of $G$, and $\pt(e)=v_1-v_0$ if $e$ is an oriented edge 
with head $v_1$ and tail $v_0$. An assignment $\ga: E \to \BZ$ gives
rise to $[\ga]=\sum_{e \in E} \ga(e) e$ and $\pt([\ga])=0$ if and only
if $\ga$ is a coloring. It follows that the set of colorings by integers
is $H_1(C,\BZ)$. Moreover, $H_0(C,\BZ)=\BZ$ since $G$ is connected.
Taking Euler characteristic, it follows that the rank $r$
of $H_1(C,\BZ)$ is given by $-|E|+|V|-1$, where $|X|$ is the number of elements
of $X$. If $G$ is planar, $-|E|+|V|-1=|F|-1$ where $F$ is the set of regions
of $G$. Thus, $r$ is the set of bounded regions.

(b) follows from the definition of the evaluation of a MOY graph, and
the identity
$$
\sum_{i=-\frac{N-1}{2}}^{\frac{N-1}{2}} q^{2j i+k}=
q^k \frac{q^{Nj}-q^{-Nj}}{q^{j}-q^{-j}}=q^k \frac{a^{j}-a^{-j}}{q^{j}-q^{-j}}|_{a=q^N}
$$
\end{proof}

In \cite[p.341]{MOY}, the authors give the following replacement rule of 
a crossing by a $\BZ[q^{\pm 1}]$ linear combination 
of local MOY graphs. For a positive crossing we have 
$$
\Bigg\la \psdraw{Rpij}{0.7in} \Bigg\ra=
\begin{cases}
\sum_{k=0}^i (-1)^{k+(j+1)i} q^{i-k} 
\Bigg \la \psdraw{Rfuse1}{1.3in} \Bigg \ra
& \qquad \text{for} \qquad i \leq j
\\
\sum_{k=0}^j (-1)^{k+(i+1)j}  q^{j-k} 
\Bigg \la \psdraw{Rfuse1}{1.3in} \Bigg \ra
& \qquad \text{for} \qquad i \geq j
\end{cases}
$$
and for a negative crossing, replace $q$ by $q^{-1}$. It follows that if
$\beta$ is a braid word in the braid group of a fixed number of strings 
and $L$ is the corresponding oriented link obtained by the closure of 
$\beta$, with components colored by $n_i$, then the colored HOMFLY polynomial 
$X_{L,(1^{n_1}),(1^{n_2}),\dots}$ equals to a linear combination the 
evaluations of the corresponding MOY graphs.

Observe that 
\begin{itemize}
\item
the coefficients of the replacement rule are $q$-proper hypergeometric
functions in all color variables. In fact, they are $q$-holonomic monomials
in all color variables,
\item
the replacement rule constructs a finite set of MOY graphs that depend
on $\beta$ and not on the colorings of $L$. The colorings of these graphs 
are linear forms of the colorings of $L$.
\end{itemize}

Since the class of $q$-holonomic functions is closed under summation and
multiplication, Theorem \ref{thm.1} follows from the following theorem
and Remark \ref{rem.1}.

\begin{theorem}
\lbl{thm.2}
For every MOY graph $G$, $\ga \mapsto \la G, \ga\ra$ is $q$-holonomic.
\end{theorem}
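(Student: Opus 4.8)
The plan is to reduce the statement to the two closure properties of $q$-holonomic functions recalled above, namely closure under finite products and under summation over a subset of the variables. By Lemma~\ref{lem.eval}(a) the colorings of $G$ are parametrized by $\ga\in\BZ^r$, and the integer assigned to each edge is a fixed $\BZ$-linear form in $\ga$; throughout, holonomicity of $\ga\mapsto\la G,\ga\ra$ is meant with respect to these $r$ variables. The goal is to exhibit $\la G,\ga\ra$ as an explicit multisum whose summand is a product of elementary $q$-holonomic building blocks and whose summation range is controlled by $\ga$.

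First I would unwind the definition of the $\fsl_N$ evaluation $\la G,\ga\ra_N$ of \cite{MOY} as a state sum. A state assigns to each edge $e$ a $\ga(e)$-element subset of $\{-(N-1)/2,\dots,(N-1)/2\}$ subject to the flow condition at every fork and fuse (the set on the thick edge being the disjoint union of those on the thin edges), and weighs it by a monomial $q^{Q}$ with $Q$ a quadratic expression in the chosen labels read off from a Morse presentation of the planar graph. I would reorganize this sum into two layers: an outer layer recording, at each trivalent vertex, the combinatorial pattern by which the labels of the incident edges interleave, and an inner layer summing each individual label over its $N$ allowed values. The inner sums have the shape $\sum_i q^{2ji+k}$ and collapse, by the identity in the proof of Lemma~\ref{lem.eval}(b), to quantum integers $q^{k}(a^{j}-a^{-j})/(q^{j}-q^{-j})$, while the outer choices contribute $q$-binomial coefficients whose two arguments are linear forms in the edge colors. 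This yields a formula
\begin{equation}
\lbl{eq.statesum}
\la G,\ga\ra=\sum_{\mathbf{k}\in S(\ga)}T(\ga,\mathbf{k}),
\end{equation}
where $\mathbf{k}$ ranges over a lattice region $S(\ga)$ cut out by inequalities linear in $\ga$, and where $T(\ga,\mathbf{k})$ is a product of factors, each being a power $q^{P}$ with $P$ a quadratic form in $(\ga,\mathbf{k})$, a power $a^{\ell}$ with $\ell$ linear, a quantum integer $(a^{\ell}-a^{-\ell})/(q^{\ell}-q^{-\ell})$, or a $q$-binomial coefficient with arguments linear in $(\ga,\mathbf{k})$.

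Granting \eqref{eq.statesum}, the remainder is routine. Each factor of $T$ is $q$-proper hypergeometric in the integer variables $(\ga,\mathbf{k})$: dividing its shift in any one variable by itself gives a rational function of $q$, $a$ and the relevant exponentials $q^{\ga_i},q^{k_l}$; in particular $a^{\ell}$ satisfies a first order recursion with the constant coefficient $a$, and $1/(q^{\ell}-q^{-\ell})$ is $q$-hypergeometric. Hence every factor, and therefore the product $T$, is $q$-holonomic in $(\ga,\mathbf{k})$. Finally the $q$-binomial factors whose upper argument is a color vanish off $S(\ga)$, so $T(\ga,\mathbf{k})$ has natural boundaries; extending the sum in \eqref{eq.statesum} over all integer vectors $\mathbf{k}$ changes nothing, and closure under summation over $\mathbf{k}$ shows that $\ga\mapsto\la G,\ga\ra$ is $q$-holonomic, which is Theorem~\ref{thm.2}.

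The main obstacle is the derivation of \eqref{eq.statesum}, and two points there require care. The analytic continuation $q^{N}\mapsto a$ of Lemma~\ref{lem.eval}(b) destroys the vanishing $\binom{N}{i}_q=0$ for $i>N$ that furnishes natural boundaries in the $\fsl_N$ state sum; one must therefore check that every genuine summation variable in \eqref{eq.statesum} is bounded by differences of colors, i.e. by linear forms in $\ga$, so that the $a$-dependence is confined to quantum-integer coefficients and the only boundaries come from vanishing color $q$-binomials. Relatedly, I would resist proving the theorem by inducting on a reduction of $G$ through the local MOY relations: although each such relation either multiplies by, or sums against, $q$-holonomic $q$-binomial weights, the sequence of reductions generally depends on the coloring $\ga$, so the resulting argument would not be uniform in $\ga$. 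It is precisely to obtain a single formula valid for all colorings simultaneously that I pass through the state sum \eqref{eq.statesum}.
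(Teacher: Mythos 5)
There is a genuine gap, and it sits exactly where you locate the ``main obstacle'': your multisum formula is asserted, not derived, and deriving it is essentially the whole content of Theorem \ref{thm.2}. The reorganization of the MOY state sum into an outer layer of interleaving patterns and inner independent geometric series does not go through for a general graph. First, the flow condition correlates the subsets assigned to edges globally along each cycle of $G$, so after fixing the local interleaving data at the vertices the individual labels do not decouple into one-dimensional sums of the shape $\sum_i q^{2ji+k}$; the collapse to quantum integers via the identity in Lemma \ref{lem.eval}(b) works for an unknotted circle, but already for graphs with cycles (a theta graph, say) the factorization is a nontrivial computation, carried out in \cite{MOY} and \cite{Jeong-Kim} by local relations, not by direct resummation. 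Second, your outer layer is not a sum over a lattice region of \emph{fixed} dimension: the interleavings of a $j$-subset and a $k$-subset at a single vertex number $\binom{j+k}{j}$, which grows with the colors, so encoding the outer data as $\mathbf{k}\in\BZ^m$ with $m$ independent of $\ga$ is precisely what needs proof. A $q$-holonomic multisum requires a fixed number of summation variables with bounds linear in $\ga$; producing such a presentation of $\la G,\ga\ra$ for arbitrary $G$ is equivalent to exhibiting an evaluation algorithm with uniform combinatorics, which is the unproved centerpiece of your argument. (Your final closure-property paragraph, and the observation that passing from $q^N$ to $a$ destroys the vanishing of $\binom{N}{i}_q$ and forces the boundaries to come from color $q$-binomials, are both fine --- but they only apply once the multisum exists.)

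Moreover, the reason you give for rejecting the reduction route is mistaken, and it is instructive to see why: the paper proves the theorem by exactly the route you dismiss. The paper defines an evaluation algorithm to be $q$-holonomic only if it replaces $(G,\ga)$ by pairs $(G',\ga')$ with $G'$ ranging in a \emph{finite set determined by $G$ alone} and $\ga'$ a linear form in $\ga$, with coefficients $q$-holonomic in the colors; Lemma \ref{lem.3} then asserts that the Jeong-Kim algorithm \cite{Jeong-Kim}, recast via relations (2.3)--(2.8) of \cite{Cautis}, has this property, because the coloring enters only through coefficients that are products of $q$-binomials in linear forms of the color variables and through summation ranges bounded linearly in $\ga$ --- dependence that closure under products and linear-range summation absorbs. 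So the sequence of graph shapes traversed by the reduction is uniform in $\ga$; only the $q$-holonomic data varies. In other words, the uniformity you feared is lost in an induction is exactly what the paper's hypothesis (b) on the algorithm enforces and what \cite{Jeong-Kim,Cautis} supply (with some intermediate steps deferred to \cite{Ga:Morrison}). To repair your proof you would either have to verify these same properties of a reduction scheme --- reproducing the paper's argument --- or restrict to families such as braid-closure ladder webs where an explicit fixed-dimension multisum can be written down by hand, which falls short of the stated generality.
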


Theorem \ref{thm.2} follows from the existence of any $q$-holonomic evaluation 
algorithm. For a detailed discussion of algorithms in quantum topology and 
{\em planar algebras} see \cite[Sec.4]{Bigelow}. Not all of those 
algorithms decrease the complexity. Some do, some keep the complexity 
fixed for a while, and some (like the jelly-fish algorithm of \cite{Bigelow}) 
initially increase the complexity.

An algorithm for evaluating MOY graphs is described in \cite{Jeong-Kim}.
A detailed discussion of the Jeong-Kim algorithm, along with the fixing of some
intermediate steps will be described in a forthcoming publication 
\cite{Ga:Morrison}. Our description of the Jeong-Kim algorithm is implicit in
the recent work of \cite{Cautis} and uses only
the relations (2.3)-(2.8) of \cite{Cautis}.

For the next lemma, we will say that an evaluation algorithm of MOY graphs 
is $q$-holonomic if  
\begin{itemize}
\item[(a)]
the algorithm uses linear relations whose coefficients are 
$q$-holonomic functions of $a=q^N$ and $q$, and 
\item[(b)]
replaces $(G,\ga)$ by graphs $(G',\ga')$ where $G'$ is in a finite set
(determined by $G$) and $\ga'$ are linear forms on $\ga$.
\end{itemize}

Like the proof of Theorem \ref{thm.1}, Theorem \ref{thm.2} is automatically
implied by the following.

\begin{lemma}
\lbl{lem.3}
The evaluation algorithm of \cite{Jeong-Kim} is $q$-holonomic.
\end{lemma}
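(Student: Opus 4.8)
The plan is to verify, one step at a time, that the Jeong-Kim algorithm satisfies the two conditions (a) and (b) defining a $q$-holonomic evaluation algorithm, since Lemma \ref{lem.3} reduces to exactly that verification. Following the remark in the excerpt, I would work with the reformulation of the algorithm through the relations (2.3)--(2.8) of \cite{Cautis}, which are the local skein relations on $\fsl_N$ webs (ladder-type relations, square-switch/``H-I'' moves, digon and circle removals, and the associativity of fusions). Each such relation rewrites a colored web as a finite $\BZ[q^{\pm 1}]$-linear (more precisely, $\BZ[a^{\pm 1},q^{\pm 1}]$-linear after the substitution $a=q^N$) combination of webs in which the boundary data and the shapes are controlled. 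The first task is therefore to list these relations explicitly and, for each one, read off its scalar coefficients as functions of the edge-colorings.

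The core of the argument is condition (a): I must check that every coefficient appearing in relations (2.3)--(2.8), viewed as a function of the colors of the incident edges and of $a=q^N$, is a $q$-holonomic function. The coefficients are $q$-binomial coefficients $\qbinom{m}{k}$ and quantum integers $[m]=(a^m-a^{-m})/(q^m-q^{-m})|_{a=q^N}$ and products/powers of $q$ in the colors. Here I would invoke exactly the mechanism already used in the excerpt for the crossing replacement rule: such expressions are $q$-proper hypergeometric, hence $q$-holonomic, in all the color variables simultaneously; the quantum-integer prefactors are handled by the identity established in the proof of Lemma \ref{lem.eval}(b), which expresses them as evaluations at $a=q^N$ of elements of $\BQ(a,q)$ that are visibly $q$-holonomic. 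So the content of step (a) is a finite check, relation by relation, that each coefficient is built from $q$-binomials, quantum integers, and monomials in $q$ raised to linear forms in the colors.

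Condition (b) is the bookkeeping half: I must confirm that each application of a relation replaces $(G,\ga)$ by webs $(G',\ga')$ where $G'$ ranges over a finite set determined by $G$ alone (not by $\ga$), and where the new colorings $\ga'$ are linear forms in the old coloring $\ga$. This is where the underlying geometry of the algorithm matters: the Jeong-Kim procedure terminates after a number of local moves bounded in terms of the combinatorial type of $G$, and at each move the summation index $k$ together with the incident colors determine the new edge-labels by the flow condition at each fork, which is manifestly linear. I would argue that since there are finitely many web-shapes reachable by the bounded sequence of moves, and each intermediate summation variable enters the new labels linearly, the pair of requirements in (b) holds; the closure of $q$-holonomic functions under summation over such linear-index families (Zeilberger's theory, as in \cite{Z90,GL}) then propagates holonomicity through the whole algorithm.

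I expect the main obstacle to be the termination and finiteness claim underlying (b): one must be sure that the Jeong-Kim algorithm, possibly after the ``fixing of some intermediate steps'' alluded to in the excerpt and deferred to \cite{Ga:Morrison}, actually halts with a bounded web-set independent of $\ga$, rather than generating shapes whose complexity grows with the colors. Relatedly, a subtle point is that some moves may not strictly decrease complexity (as the excerpt itself warns), so the finiteness of the reachable set of shapes cannot come from a naive monovariant and must instead be read off from the structure of the algorithm. Once that structural fact is granted, the $q$-holonomic verification of the coefficients is routine, and Lemma \ref{lem.3}, and with it Theorem \ref{thm.2}, follows.
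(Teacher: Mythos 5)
Your proposal is correct and follows essentially the same route as the paper: the paper's entire proof is the observation that the coefficients in relations (2.3)--(2.8) of \cite{Cautis} are $q$-proper hypergeometric (in fact products of $q$-binomials) in linear forms of the color variables, which is exactly your verification of condition (a). Your more cautious treatment of condition (b) --- termination and the finiteness of the reachable set of web shapes independently of the coloring --- identifies precisely the structural point the paper itself leaves implicit and defers to the forthcoming detailed treatment in \cite{Ga:Morrison}, so your flagged ``main obstacle'' is not a gap in your argument relative to the paper but an accurate reading of where the paper's own details reside.
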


\begin{proof}
$q$-holonomicity of the evaluation algorithm follows from the fact that
the coefficients in relations (2.3)-(2.8) of \cite{Cautis} are $q$-proper
hypergeometric (in fact, products of $q$-binomials) in linear forms
of the color variables. 
\end{proof}


\section{Computations}
\lbl{sec.discussion}

There are several papers in the physics literature that discuss the colored
HOMFLY polynomial of a knot, see for example \cite{Ito1,Ito2,NRS,Fu1,Fu2}.
Although the colored HOMFLY polynomial is a well-defined object,  
the formulas for the colored HOMFLY polynomial presented
in the above papers are often void of rigor, although their specializations 
match rigorous computations of the colored Jones polynomial,
and Khovanov Homology. If a formula is written as a multi-dimensional sum
of a $q$-proper hypergeometric summand, then the corresponding function
is $q$-holonomic, and a rigorous computation of a recursion is possible
(see \cite{WZ}) using computer-implemented methods \cite{PWZ}. 
This was precisely done in \cite{NRS}, using some formulas that ought to 
give the colored HOMFLY polynomial of twist knots.

Rigorous formulas for the colored HOMFLY polynomial of a knot are at least
as hard as formulas for the $\fsl_2$-colored Jones polynomial and the latter
are already difficult. For the $1$-parameter family of twist knots, an
iterated 2-dimensional sum for the $\fsl_2$-colored Jones polynomial was 
obtained by Habiro \cite{Ha,Masbaum}. A HOMFLY extension of Habiro's 
formulas appears in \cite{Kawagoe}. Using Kawagoe's formulas for 
the twist knots $3_1$, $4_1$, $5_2$ and $6_1$ and applying the creative
telescoping method of Zeilberger, one can obtain a rigorous computation
of the super-polynomial $A_K(a,q,M,L) \in \BWt$ for the twist knots  
$3_1$, $4_1$, $5_2$ and $6_1$. The results appear in \cite{NRS}.

In addition, a matrix model formulation of the colored HOMFLY polynomial
of torus knots, combined with a topological recursion, provides a rigorous
proof of Theorem \ref{thm.1} for all torus links, see \cite{Brini}.

\bibliographystyle{hamsalpha}
\bibliography{biblio}
\end{document}